\providecommand*\Index[1]{#1\index{#1}}
\providecommand*\undex[1]{} % abandoned tag
\providecommand*\Undex[1]{#1} % abandoned tag
\theoremstyle{plain}
\newtheorem{theorem}{Theorem}[section]
\newtheorem{lemma}[theorem]{Lemma}
\mathchardef\cd="201 % cdot as Ord atom
\newcommand{\N}{\mathbb{N}}
\newcommand{\R}{\mathbb{R}}
\newcommand{\oh}{\mathrm{o}}
\newcommand{\one}{\mathbf{1}}
\newcommand{\var}{\mathop{\mathrm{var}}}%
\newcommand{\Geom}{\mathop{\mathrm{Geom}}}
\newcommand{\abs}[1]{\lvert#1\rvert}
\begin{document}
\alphafootnotes
\author[C. M. Goldie, R. Cornish and C. L. Rob\-inson]{C. M.
  Goldie\footnotemark, R. Cornish\footnotemark\ and C. L.
  Robinson\footnotemark }
\chapter[Coupon collecting and assessment]{Applying coupon-collecting theory to
  computer-aided assessments}
\footnotetext[1]{Mathematics Department, Mantell Building, University of
  Sussex, Brighton BN1 9RF; C.M.Goldie@sussex.ac.uk}
\footnotetext[2]{Department of Social Medicine, University of Bristol, Canynge
  Hall, 39 Whatley Road, Bristol BS8 2PS; R.Cornish@bristol.ac.uk}
\footnotetext[3]{Mathematics Education Centre, Schofield Building,
  Loughborough University, Loughborough LE11 3TU;
  C.L.Robinson@lboro.ac.uk}
\arabicfootnotes
\contributor{Charles M. Goldie \affiliation{University of Sussex}}
\contributor{Rosie Cornish \affiliation{University of Bristol}}
\contributor{Carol L. Robinson \affiliation{Loughborough University}}
\renewcommand\thesection{\arabic{section}}
\numberwithin{equation}{section}
\renewcommand\theequation{\thesection.\arabic{equation}}
\numberwithin{figure}{section}
\renewcommand\thefigure{\thesection.\arabic{figure}}
\numberwithin{table}{section}
\renewcommand\thetable{\thesection.\arabic{table}}

\begin{abstract}Computer-based tests with randomly generated questions allow
a large number of different tests to be generated. Given a fixed number of
alternatives for each question, the number of tests that need to be generated
before all possible questions have appeared is surprisingly low.\end{abstract}

\subparagraph{AMS subject classification (MSC2010)}60G70, 60K99

\section{Introduction}\label{s:intro}
The use of computer-based tests\index{computer based test@computer-based test|(} in which questions are randomly
generated in some way provides a means whereby a large number of different
tests can be generated; many universities currently use such tests as part of
the student assessment process. In this paper we present findings that
illustrate that, although the number of different possible tests is high and
grows very rapidly as the number of alternatives for each question increases,
the average number of tests that need to be generated before all possible
questions have appeared at least once is surprisingly low. We presented
preliminary findings along these lines in \cite{CGR}.

A computer-based test consists of $q$
questions, each (independently) selected at random from a separate bank of $a$
alternatives. Let $N_q$ be the number of tests one needs to generate in order
to see all the $aq$ questions in the $q$ question banks\index{question bank} at least once. We are
interested in how, for fixed $a$, the random variable $N_q$ grows with the
number of questions $q$ in the test. Typically, $a$ might be 10---i.e.\ each
question might have a bank of 10 alternatives---but we shall allow any value
of $a$, and give numerical results for $a=20$ and $a=5$ as well as for $a=10$.

\section{Coupon collecting}\label{s:coupon}\index{coupon collecting|(}
In the case $q=1$, i.e.\ a one-question test, we re-notate $N_q$ as
$Y$, and observe that we have an equivalent to the classic coupon-collector
problem: your favourite cereal has a coupon in each packet, and there are $a$
alternative types of coupon. $Y$ is the number of packets you have to buy in
order to get at least one coupon of each of the $a$ types. The
coupon-collector problem has been much studied; see e.g.\ \cite[p.~55]{GS}.

We can write $Y$ as
$$ Y = Y_1 + Y_2 + \cdots + Y_a$$
where each $Y_i$ is the number of cereal packets you must buy in order to
acquire a new type of coupon, when you already have $i-1$ types in your
collection. Thus $Y_1=1$, $Y_2$ is the number of further packets you find you
need to gain a second type, and so on. The random variables $Y_1$, \dots,
$Y_a$ are mutually independent. For the distribution of $Y_k$, clearly
$$P(Y_k=y)=\frac{a-k+1}a\left(\frac{k-1}a\right)^{y-1}\qquad(y=1,2,\ldots).$$
We say that $X\sim \Geom(p)$, or $X$ has a geometric distribution with
parameter $p$, if $P(X=x)=p(1-p)^{x-1}$ for $x=1$, 2, \ldots. Thus $Y_k\sim
\Geom\bigl((a-k+1)/a\bigr)$. As the $\Geom(p)$ distribution has expectation
$1/p$ it follows that
$$EY=\sum_{k=1}^a EY_k=\sum_{k=1}^a\frac a{a-k+1}=a\sum_{k=1}^a\frac 1k.$$
For different values of $a$ we therefore have the following.

{\small
\begin{center}
\begin{tabular}{c|cccc}%\hline
$a$ & 5 & 10 & 15 & 20 \\ %\hline
$EY$ & $11\cd42$ & $29\cd29$ & $49\cd77$ & $71\cd96$ \\ %\hline
\end{tabular}
\end{center}
}%

\noindent In other words, if there are 10 coupons to collect then an average
of 29 packets of cereal would have to be bought in order to obtain all 10 of
these coupons. In the context of computer-based tests, if a test had one
question selected at random from a bank of 10 alternatives, an average of 29
tests would need to be generated in order to see all the questions at least
once.

To apply the theory to tests with more than one question we will also need an
explicit expression for $P(Y>y)$. To revert to the language of coupons in
cereal packets, let us number the coupon types 1, 2, \ldots, $a$, and let
$A_i$ be the event that type $i$ does not occur in the first $y$ cereal
packets bought. The event that $Y>y$ is then the union of the events $A_1$,
$A_2$, \ldots, $A_a$. So by the \Index{inclusion-exclusion} formula,
\begin{align*}
P(Y>y)&=P\left(\bigcup_{i=1}^a A_i\right)\\
  &=\sum_{i=1}^a P(A_i)-\sum_{i<j}P(A_i\cap A_j)
    +\sum_{i<j<k} P(A_i\cap A_j\cap A_k)-\cdots\\
  &\qquad{}+(-1)^{a+1}P(A_1\cap\cdots\cap A_a).
\end{align*}
Obviously $P(A_i)=(1-1/a)^y$ for each $i$. For distinct $i$ and $j$, $A_i\cap
A_j$ is the event that a particular two of the $a$ coupon types do not occur
in the first $y$ purchases, so has probability $(1-2/a)^y$. Similarly $A_i\cap
A_j\cap A_k$, for distinct $i$, $j$ and $k$, has probability $(1-3/a)^y$, and
so on. We conclude that
\begin{equation}P(Y>y)=\sum_{k=1}^a(-1)^{k+1}\binom ak\Bigl(1-\frac ka\Bigr)^y
\label{e:Yy}\end{equation}
(when $y>0$ the final term of the sum is zero). Let $F$ be the distribution
function for $Y$; thus the above is equivalent to
\begin{equation}F(y):=P(Y\le y)
  =\sum_{k=0}^a(-1)^k\binom ak\Bigl(1-\frac ka\Bigr)^y
  \quad(y=a,a+1,a+2,\dots).\label{e:Fy}\end{equation}
This is a classical formula for the probability that all cells are occupied\undex{occupancy}
when $y$ balls are distributed at random among $a$ cells\index{balls in cells};
cf.\ \cite[(11.11)]{Fe}\index{Feller, W.}. The right-hand side of (\ref{e:Fy}) has value 0 when
$y=0$, 1, \dots, $a-1$.

\section{How many tests?}\label{s:howmany}
We return to the initial question. We have a test containing
$q$ questions, each selected at random from a bank of $a$ alternatives. $N_q$
is defined to be the number of tests that need to be generated in order to see
all possible $aq$ questions at least once.

For question $j$ of the test, let $Y_j$ be the number of tests needed to see
all the $a$ alternatives in its \Index{question bank}. The random variables $Y_1$,
$Y_2$, \ldots, $Y_q$ are mutually independent, each distributed as the $Y$ of
the previous section, and $N_q$ is their maximum:
$$N_q=\max\{Y_1, Y_2,\ldots,Y_q\}.$$
We thus have
\begin{align}
  EN_q&= \sum_{n=0}^\infty P(N_q>n)\notag \\
  &= \sum_{n=0}^\infty \bigl(1-P(N_q\leq n)\bigr)\notag \\
  &= \sum_{n=0}^\infty \left(1-\prod_{j=1}^q P(Y_j\leq n)\right)\notag \\
  &= \sum_{n=0}^\infty \left(1-\prod_{j=1}^q
    \bigl(1-P(Y_j>n)\bigr)\right)\notag \\
  &= \sum_{n=0}^\infty \left(1-\bigl(1-P(Y>n)\bigr)^q\right).\label{e:ENq}
\end{align}
This can be reduced to a finite sum as follows.
\begin{align*}
  EN_q&=\sum_{n=0}^\infty\sum_{m=1}^q(-1)^{m+1}\binom qm\bigl(P(Y>n)\bigr)^m\\
  &=\sum_{n=0}^\infty\sum_{m=1}^q(-1)^{m+1}\binom qm
 \sum_{j_1=1}^a(-1)^{j_1+1}\binom a{j_1} \left(1-\frac{j_1}{a}\right)^n
   \cdots\\
 &\qquad{}\cdots
 \sum_{j_m=1}^a(-1)^{j_m+1}\binom a{j_m}\left(1-\frac{j_m}{a}\right)^n\\
  &= -\sum_{m=1}^q\binom qm\sum_{j_1=1}^a\cdots\sum_{j_m=1}^a
    (-1)^{j_1+\cdots+j_m}\\
  &\qquad \qquad{}\binom a{j_1}\cdots\binom a{j_m}
  \sum_{n=0}^\infty\left(\prod_{i=1}^m\left(1-\frac{j_i}{a}\right)\right)^n\\
  &= -\sum_{m=1}^q\binom qm\sum_{j_1=1}^a\cdots\sum_{j_m=1}^a
    \frac{(-1)^{j_1+\cdots+j_m}\binom a{j_1}\cdots\binom a{j_m}}
  {1-\prod_{i=1}^m(1-j_i/a)}.
\end{align*}
This, though, is not well suited to computation, and we have used
(\ref{e:ENq}) for the numerical results below.

%{\small

\subparagraph{Note}The way in which CMG got involved in writing this paper
was through chancing on a query posted by RC on \Index{Allstat}, a UK-based electronic
mailing list, asking how to calculate the expected number of tests a student
would need to access in order to see the complete bank of questions. CMG
immediately recognised the query as a form of coupon-collecting problem, but
not quite in standard form. What he should have done then was to think and
calculate, following Littlewood's famous
advice \cite[p.~93]{LM}\index{Littlewood, J. E.}
\begin{quote}
``It is of course good policy, and I have often practised it, to begin without
going too much into the existing literature''.
\end{quote}
What he actually did was to seek previous work using
\index{Google@{\it Google}}{\it Google}. With customary
speed and accuracy, \emph{Google} produced a list with
\cite{AR}\index{Ross, S. M.}\index{Adler, I.} in position
6. Knowing that Sheldon Ross is unbeatable at combinatorial probability
problems, CMG looked up this paper---and was thoroughly led astray. The paper
does indeed treat our problem and is an excellent paper, but it is much more
general than we needed and sets up a structure that obscures the relatively
simple nature of what we needed for this problem. It was better to work the
above out from first principles.\index{coupon collecting|)}

%}%

\section{Asymptotics}\label{s:asymp}
We employ Extreme-Value Theory (EVT)\index{extreme-value theory (EVT)} to investigate the random
variable $N_q$ as the number of questions $q$ becomes large, the number $a$ of
alternatives per question staying fixed. It turns out we are in a case
identified by C.~W. Anderson\index{Anderson, C. W.} in 1970, where a limit fails to exist but there
are close bounds above and below. Thus despite the absence of a limit we gain
asymptotic results of some precision.

The relevant extreme-value distribution will be the \emph{Gumbel
distribution}\index{Gumbel, E. J.!Gumbel distribution|(}, with (cumulative) distribution function
$\Lambda(x)=\exp(-e^{-x})$ for all $x\in\R$; write $Z$ for a random variable
with the Gumbel distribution.

Throughout this section $a\ge2$ is an integer, and we set
$\alpha:=\log(a/(a-1))>0$. Proofs of the results in this section are in
\S\ref{s:pfs}.

A first goal of EVT for the random variables $N_q$ would be
to find a \Undex{norming} sequence $a_q>0$ and a \Undex{centring} sequence $b_q$ such that
$(N_q-b_q)/a_q$ has a limit distribution as $q\to\infty$.

\begin{theorem}\label{t:noconv}There do not exist sequences $a_q>0$ and $b_q$
such that $(N_q-b_q)/a_q$ has a non-degenerate limit distribution as
$q\to\infty$. However, with $b_q:=\alpha^{-1}\log(aq)$ we have for all $x\in\R$
that
\begin{multline}\Lambda(\alpha(x-1))=\liminf_{q\to\infty}P(N_q-b_q\le x)\\
  \le\limsup_{q\to\infty}P(N_q-b_q\le x)=\Lambda(\alpha x).
  \label{e:Lambdabounds}\end{multline}
Thus $N_q-b_q$, in distribution, is asymptotically between $\alpha^{-1}Z$ and
$1+\alpha^{-1}Z$, with $Z$ Gumbel, and these \Index{distributional bounds} are sharp.

To describe the local behaviour\index{extreme-value theory (EVT)!local EVT}, let $\lfloor x\rfloor$ denote the integer
part of $x$, $\{x\}:=x-\lfloor x\rfloor$ the fractional part, and let $\lceil
x\rceil:=\lfloor x\rfloor+1$. Then for each integer $n$,
\begin{multline}P(N_q-\lceil b_q\rceil=n)
  -\Lambda\bigl(\alpha(n+1-\{b_q\})\bigr)
  +\Lambda\bigl(\alpha(n-\{b_q\})\bigr)\to0\\
  \mbox{as $q\to\infty$.}\label{e:local}\end{multline}
\end{theorem}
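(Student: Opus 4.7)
The whole theorem reduces to one uniform asymptotic. First I would show that for any sequence of positive integers $m_q$ with $m_q - b_q$ bounded,
$$
P(N_q \le m_q) - \Lambda\bigl(\alpha(m_q - b_q)\bigr) \to 0 \qquad (\ast)
$$
as $q \to \infty$. Indeed in (\ref{e:Yy}) the $k = 1$ term dominates, giving $P(Y > m) = a(1-1/a)^m(1 + o(1)) = ae^{-\alpha m}(1 + o(1))$ as $m \to \infty$; boundedness of $m_q - b_q$ combined with $b_q \to \infty$ forces $m_q \to \infty$, so $P(Y > m_q) \to 0$ and $qP(Y > m_q) = e^{-\alpha(m_q - b_q)}(1 + o(1))$ stays bounded. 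Expanding $\log(1 - u) = -u + O(u^2)$ then yields $\log P(N_q \le m_q) = q\log(1 - P(Y > m_q)) = -e^{-\alpha(m_q - b_q)} + o(1)$, which exponentiates to $(\ast)$.

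Granting $(\ast)$, the bounds (\ref{e:Lambdabounds}) come from taking $m_q = \lfloor b_q + x\rfloor$, so that $m_q - b_q = x - \{b_q + x\} \in (x - 1, x]$; monotonicity of $\Lambda$ confines $P(N_q - b_q \le x)$ asymptotically to $(\Lambda(\alpha(x-1)), \Lambda(\alpha x)]$. Sharpness uses that $\{b_q\}$ is dense in $[0, 1)$: the increments $b_{q+1} - b_q = \alpha^{-1}\log(1 + 1/q) \to 0$ while $b_q \to \infty$, so the fractional parts visit every subinterval infinitely often. Choosing subsequences along which $\{b_q + x\} \to 0^+$ and $\{b_q + x\} \to 1^-$ realises the limsup as $\Lambda(\alpha x)$ and the liminf as $\Lambda(\alpha(x - 1))$. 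The local limit (\ref{e:local}) then drops out by subtraction: apply $(\ast)$ to the sequences $\lceil b_q\rceil + n$ and $\lceil b_q\rceil + n - 1$, whose distances from $b_q$ are $n + 1 - \{b_q\}$ and $n - \{b_q\}$ respectively; writing $P(N_q = \lceil b_q\rceil + n)$ as the difference of the two tail probabilities gives the claim.

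The non-existence claim is the subtle part. Suppose for contradiction that $(N_q - B_q)/A_q$ converges in distribution to a non-degenerate $G$, and fix a continuity point $x$ with $G(x) \in (0, 1)$; set $m_q = \lfloor A_q x + B_q \rfloor$, so $P(N_q \le m_q) \to G(x)$. If $m_q - b_q$ were unbounded, along a subsequence it would tend to $+\infty$ or $-\infty$, making $qP(Y > m_q)$ tend to $0$ or $\infty$ and hence $P(N_q \le m_q)$ tend to $1$ or $0$, a contradiction. Hence $m_q - b_q$ is bounded, and $(\ast)$ forces it to converge to $-\alpha^{-1}\log(-\log G(x))$. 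But $m_q - b_q$ equals an integer minus $\{b_q\}$, so its convergence forces $\{b_q\}$ to converge modulo $1$, contradicting the density established above.

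\textbf{Main obstacle.} The only step that is not routine is this last one: the prospective norming $A_q$ and centring $B_q$ are not a priori comparable to $b_q$. The resolution is to use $(\ast)$ itself to force the comparison, and then invoke the density of $\{b_q\}$ modulo $1$ to derive the contradiction. Everything else, including the local refinement, is monotonicity and subtraction.
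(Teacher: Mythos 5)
Your proposal is correct, and for the two displayed conclusions it follows essentially the paper's own route: both arguments rest on the observation that $P(N_q\le m_q)=\bigl(1-ae^{-\alpha m_q}(1+\oh(1))\bigr)^q$ together with the locally uniform convergence $(1-c/q)^q\to e^{-c}$, both obtain (\ref{e:Lambdabounds}) by squeezing $\lfloor x+b_q\rfloor-b_q$ into $(x-1,x]$ and exhibiting subsequences along which the fractional part tends to $0$ or to $1$ (the paper constructs its subsequence $q(k)$ as the indices at which $(x+b_{q-1},x+b_q]$ captures an integer, which is the same density-of-$\{b_q\}$ phenomenon you invoke), and both get (\ref{e:local}) by subtracting two instances of the uniform asymptotic. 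Where you genuinely diverge is the non-existence claim. The paper disposes of it in one line by citing Anderson's 1970 criterion: since $P(Y>y)/P(Y>y+1)\to a/(a-1)>1$, the integer-valued $Y$ lies in no max-domain of attraction. You instead give a self-contained proof: any putative norming $A_q$ and centring $B_q$ would, via your uniform asymptotic $(\ast)$ applied to $m_q=\lfloor A_qx+B_q\rfloor$, force $m_q-b_q$ to converge for a continuity point with $G(x)\in(0,1)$, hence force $\{b_q\}$ to converge modulo $1$, contradicting the density of $\{b_q\}$ that you have already established for the sharpness step. This argument is sound (the boundedness dichotomy for $m_q-b_q$ and the deduction that an integer minus $\{b_q\}$ converging implies $\{b_q\}$ converges mod $1$ both check out), and it buys independence from the cited literature at the cost of a page of extra work; conversely the paper's citation is shorter but leaves the reader to consult Anderson for why the ratio condition is an obstruction. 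Your version has the additional merit of reusing the same two ingredients ($(\ast)$ and density of $\{b_q\}$ mod $1$) for all three assertions of the theorem.
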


We remark that the Gumbel distribution\index{Gumbel, E. J.!Gumbel distribution|)} has mean $\gamma\bumpeq0\cd5772$, the
Euler--Mascheroni constant\index{Euler, L.!Euler Mascheroni constant@Euler--Mascheroni constant}, and
variance $\pi^2/6$. Its distribution tails\index{tail behaviour}
decay exponentially or better: $\lim_{x\to\infty}e^x(1-\Lambda(x))=1$ and
$\lim_{x\to-\infty}e^{-x}\break\Lambda(x)=0$. We use these facts below. We first
extend the above \index{stochastically bounded}stochastic boundedness of the sequence $(N_q-b_q)$ to
$L^p$-boundedness\index{Lp bounded@$L^p$-bounded} for all $p$. For the rest of the paper we set
$b_q:=\alpha^{-1}\log(aq)$ and $R_q:=N_q-b_q$.

\begin{theorem}\label{t:Lp}For each $p\ge1$,
$\sup_{q\in\N}E(\abs{R_q}^p)<\infty$.
\end{theorem}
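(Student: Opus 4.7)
The plan is to establish uniform-in-$q$ tail estimates for $R_q$---exponential on the right and doubly-exponential on the left---and then deduce $L^p$-boundedness via the identity
\[
E|R_q|^p=p\int_0^\infty x^{p-1}P(|R_q|>x)\,dx.
\]
Throughout I use that $(1-1/a)^{b_q}=e^{-\alpha b_q}=1/(aq)$.

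For the upper tail, the first Bonferroni inequality applied to (\ref{e:Yy}) retains only the leading positive term to give $P(Y>y)\le a(1-1/a)^y=ae^{-\alpha y}$. Combined with the elementary bound $1-(1-p)^q\le qp$, this yields, for $x\ge0$,
\[
P(R_q>x)=1-\bigl(1-P(Y>b_q+x)\bigr)^q\le qa(1-1/a)^{b_q+x}=e^{-\alpha x}.
\]

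The lower tail is where the work lies. The second Bonferroni inequality gives
\[
P(Y>y)\ge a(1-1/a)^y-\binom{a}{2}(1-2/a)^y=ae^{-\alpha y}\left[1-\frac{a-1}{2}\left(\frac{a-2}{a-1}\right)^y\right].
\]
For $a=2$ the bracket equals $1$; for $a\ge 3$, $(a-2)/(a-1)\in(0,1)$, so the bracket tends to $1$ and, from some $y_0(a)$ onward, is at least $1/2$. On the remaining finite range $a\le y<y_0$ the probability $P(Y>y)$ is a positive number, so there is a constant $c=c(a)>0$ with $P(Y>y)\ge ce^{-\alpha y}$ for every integer $y\ge a$. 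Applying $(1-p)^q\le e^{-qp}$ then delivers, for any $x$ with $b_q-x\ge a$,
\[
P(R_q\le -x)=\bigl(1-P(Y>b_q-x)\bigr)^q\le\exp\bigl(-cq(1-1/a)^{b_q-x}\bigr)=\exp\bigl(-(c/a)e^{\alpha x}\bigr),
\]
and $P(R_q\le -x)=0$ whenever $b_q-x<a$.

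Substituting both estimates into the integral representation produces
\[
E|R_q|^p\le p\int_0^\infty x^{p-1}\Bigl(e^{-\alpha x}+\exp\bigl(-(c/a)e^{\alpha x}\bigr)\Bigr)\,dx,
\]
a finite bound independent of $q$. The main obstacle is the lower-tail step: one must verify that $P(Y>y)$ is comparable to its leading Bonferroni term $ae^{-\alpha y}$ for \emph{all} relevant $y$ down to $y=a$, so that the doubly-exponential squeeze on $P(R_q\le -x)$ holds uniformly in~$q$.
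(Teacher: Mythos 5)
Your proof is correct, and while it follows the same overall skeleton as the paper's --- an exponentially decaying bound on the upper tail $P(R_q>x)$, a doubly-exponentially decaying bound on the lower tail $P(R_q\le-x)$, both plugged into a tail-integral formula for the moment --- the execution is genuinely different and in some ways cleaner. The paper works with even moments $E(R_q^{2n})$, derives its tail bounds only asymptotically from (\ref{e:Ftail}) (an upper bound $P(Y>y)\le2ae^{-\alpha(y-1)}$ for $y\ge y_0$ and a lower bound $P(Y>y)\ge\tfrac12ae^{-\alpha y}$ for $y\ge y_1$), and consequently must (i) invoke a separate uniformity lemma (Lemma \ref{l:e}) to control $1-(1-u/q)^q$ for large $q$, (ii) split the negative-side integral into a piece where the doubly-exponential bound applies and a residual piece over $[-b_q,y_1-b_q]$ that is shown to vanish via $F^q(y_1)\to0$, and (iii) pass from a bound on the $\limsup$ to a bound on the $\sup$. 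You avoid all three complications by making the tail bounds non-asymptotic: the union bound gives $P(Y>y)\le a(1-1/a)^y$ for every integer $y$, and the second Bonferroni inequality plus positivity of $P(Y>y)$ on the finite exceptional range gives $P(Y>y)\ge ce^{-\alpha y}$ for \emph{every} relevant $y$ (indeed for all $y\ge0$, since $P(Y>y)=1$ when $y<a$), which is exactly the crux you correctly identified; your resulting bounds then hold for all $q$, not just large $q$. The only slip is cosmetic: since $Y$ is integer-valued, $P(Y>b_q+x)=P(Y>\lfloor b_q+x\rfloor)\le a(1-1/a)^{\lfloor b_q+x\rfloor}\le a(1-1/a)^{b_q+x-1}$, so your upper-tail bound should read $\tfrac{a}{a-1}e^{-\alpha x}$ rather than $e^{-\alpha x}$ (the floor costs a factor of at most $a/(a-1)\le2$); this constant is harmless for finiteness, and the corresponding floor in the lower-tail step goes in the favourable direction, so nothing else is affected.
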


Theorem \ref{t:Lp} implies that the distributional asymptotics of Theorem
\ref{t:noconv} will extend to give asymptotic bounds on
moments\index{moment!bound}. Moment convergence in
EVT\index{extreme-value theory (EVT)!moment convergence in} is treated in
\cite[\S2.1]{Res87}\index{Resnick, S. I.}, and we use some
of the ideas from the proofs there in proving the results below.

\begin{theorem}\label{t:L1}
\begin{multline*}\frac{\gamma+\log a}\alpha
  \le\liminf_{q\to\infty}\Bigl(EN_q-\frac{\log q}\alpha\Bigr)\\
  \le\limsup_{q\to\infty}\Bigl(EN_q-\frac{\log q}\alpha\Bigr)
  \le\frac{\gamma+\log a}\alpha+1.\end{multline*}
\end{theorem}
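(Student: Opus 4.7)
Since $b_q=\alpha^{-1}\log(aq)=\alpha^{-1}\log a+\alpha^{-1}\log q$, the statement is equivalent to
$$\gamma/\alpha\;\le\;\liminf_{q\to\infty}ER_q\;\le\;\limsup_{q\to\infty}ER_q\;\le\;\gamma/\alpha+1,$$
so I would work throughout with $R_q=N_q-b_q$. Starting from the tail formula
$$ER_q=\int_0^\infty P(R_q>x)\,dx-\int_{-\infty}^0 P(R_q\le x)\,dx,$$
the plan is to push $\liminf$ and $\limsup$ inside the two integrals, replace the resulting pointwise $\liminf/\limsup$ of probabilities by the bounds from Theorem~\ref{t:noconv}, and then evaluate the integrals by relating them to $EZ=\gamma$.

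For the upper bound on $\limsup ER_q$, I would apply reverse Fatou to the first integral and ordinary Fatou to the second, giving
$$\limsup_{q\to\infty}ER_q\le\int_0^\infty\bigl(1-\Lambda(\alpha(x-1))\bigr)\,dx-\int_{-\infty}^0\Lambda(\alpha(x-1))\,dx.$$
The substitution $u=\alpha(x-1)$ rewrites the right-hand side as
$$\alpha^{-1}\Bigl[\int_{-\alpha}^\infty(1-\Lambda(u))\,du-\int_{-\infty}^{-\alpha}\Lambda(u)\,du\Bigr]
=\alpha^{-1}\Bigl[\gamma+\int_{-\alpha}^{0}1\,du\Bigr]=\gamma/\alpha+1,$$
using $\int_0^\infty(1-\Lambda)-\int_{-\infty}^0\Lambda=EZ=\gamma$ and combining the $(1-\Lambda)+\Lambda=1$ on $[-\alpha,0]$. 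The lower bound is entirely symmetric: Fatou on the first integral and reverse Fatou on the second, with the substitution $u=\alpha x$, yields
$$\liminf_{q\to\infty}ER_q\ge\alpha^{-1}\Bigl[\int_0^\infty(1-\Lambda(u))\,du-\int_{-\infty}^0\Lambda(u)\,du\Bigr]=\gamma/\alpha.$$

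The one genuine technical step is justifying the interchange of $\limsup/\liminf$ with the integrals, i.e.\ the application of reverse Fatou. For this I would appeal to Theorem~\ref{t:Lp}: fixing any $p\ge2$, Markov's inequality gives $P(R_q>x)\le\sup_q E|R_q|^p/x^p$ for $x>0$ and $P(R_q\le x)\le\sup_q E|R_q|^p/\abs{x}^p$ for $x<0$, providing an integrable dominating function on $(0,\infty)$ and $(-\infty,0)$ respectively (away from the origin, where the integrand is in any case bounded by $1$). This is the only serious obstacle; once domination is in hand, the computation above is a matter of routine substitution and the definition of the Gumbel mean.
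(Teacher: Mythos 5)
Your proposal is correct. It rests on exactly the same three inputs as the paper's proof — the sharp distributional bounds $\liminf_q P(R_q>x)=P(\alpha^{-1}Z>x)$ and $\limsup_q P(R_q>x)=P(1+\alpha^{-1}Z>x)$ from Theorem \ref{t:noconv}, Fatou's lemma in both its ordinary and reverse forms, and the uniform moment bound of Theorem \ref{t:Lp} used via Markov's inequality — but it organizes the limiting argument differently. The paper truncates at a level $-c$, writes $ER_q$ via $E(R_q\one_{R_q>-c})$ and the identity (\ref{e:Xone}), applies reverse Fatou only on the bounded interval $[-c,c]$ (where the constant $1$ dominates), controls the two tails $\{|x|>c\}$ separately by the Markov bound $K/x^3$ and by uniform integrability, and then takes a second limit $c\to\infty$. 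You instead build a single integrable envelope $\min(1,K/|x|^p)$ on each half-line, which lets you apply Fatou and reverse Fatou once, globally, to the tail formula $ER_q=\int_0^\infty P(R_q>x)\,dx-\int_{-\infty}^0 P(R_q\le x)\,dx$, avoiding the double limit entirely. Your route is the slicker of the two for this first-moment statement; the paper's truncation scheme has the advantage that it recycles directly, with the same decomposition, into the second-moment estimates of Lemma \ref{l:Rq2}, where the integrand $xP(R_q>x)$ changes sign and the bookkeeping by intervals is genuinely needed. Your evaluation of the resulting Gumbel integrals (the substitution $u=\alpha(x-1)$ and the identity $\int_0^\infty(1-\Lambda)-\int_{-\infty}^0\Lambda=\gamma$) is a correct computation of $E(1+\alpha^{-1}Z)$ and $E(\alpha^{-1}Z)$, matching the bounds (\ref{e:ERqbounds}) that the paper proves.
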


By similar methods one may obtain bounds on higher moments. We content
ourselves with those on the second moment, leading to good bounds on $\var
N_q$, the variance of $N_q$.\index{variance bound|(}

\begin{lemma}\label{l:Rq2}
\begin{multline}E\bigl((1+\alpha^{-1}Z)^2\one_{1+\alpha^{-1}Z\le0}
  +(\alpha^{-1}Z)^2\one_{Z>0}\bigr)\le\liminf_{q\to\infty}E(R_q^2)\\
  \le\limsup_{q\to\infty}E(R_q^2)\le E\bigl((\alpha^{-1}Z)^2\one_{Z\le0}
  +(1+\alpha^{-1}Z)^2\one_{1+\alpha^{-1}Z>0}\bigr).\label{e:Rq2}\end{multline}
\end{lemma}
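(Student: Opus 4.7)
The plan is to express $E(R_q^2)$ as a layer-cake integral split by the sign of $R_q$ and then apply Fatou's lemma for the $\liminf$ bound and its reverse for the $\limsup$ bound, reading off the $\liminf$ and $\limsup$ of the CDFs from Theorem \ref{t:noconv}. Starting from $E(R_q^2)=\int_0^\infty P(R_q^2>t)\,dt$ and substituting $t=s^2$, one gets
$$E(R_q^2) = \int_0^\infty 2s\,P(R_q>s)\,ds + \int_0^\infty 2s\,P(R_q\le -s)\,ds,$$
the atoms of $R_q$ being countably many and hence Lebesgue-null, so that $<$ and $\le$ may be interchanged inside the integrals. Write $I_q^+$ and $I_q^-$ for the two terms.

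For the $\limsup$, use $\limsup(I_q^++I_q^-)\le\limsup I_q^++\limsup I_q^-$ and apply reverse Fatou to each piece separately. The integrable dominator required is furnished by Theorem \ref{t:Lp} with $p=3$: by Markov's inequality, $2s\,P(\abs{R_q}>s)\le 2\sup_q E(\abs{R_q}^3)\,s^{-2}$ on $s\ge 1$, while on $[0,1]$ both probabilities are trivially bounded by $1$. From Theorem \ref{t:noconv},
$$\limsup_{q\to\infty} P(R_q>s) = 1 - \Lambda(\alpha(s-1)), \qquad \limsup_{q\to\infty} P(R_q\le -s) = \Lambda(-\alpha s),$$
so that
$$\limsup_{q\to\infty} E(R_q^2) \le \int_0^\infty 2s\bigl(1-\Lambda(\alpha(s-1))\bigr)\,ds + \int_0^\infty 2s\,\Lambda(-\alpha s)\,ds.$$
Applying the same layer-cake identity in reverse to the Gumbel-based variables $V:=1+\alpha^{-1}Z$ and $V':=\alpha^{-1}Z$ identifies these two integrals as $E\bigl((1+\alpha^{-1}Z)^2\one_{1+\alpha^{-1}Z>0}\bigr)$ and $E\bigl((\alpha^{-1}Z)^2\one_{Z\le 0}\bigr)$, which is the right-hand side of (\ref{e:Rq2}).

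The $\liminf$ bound is entirely parallel: $\liminf(I_q^++I_q^-)\ge\liminf I_q^++\liminf I_q^-$, and the ordinary Fatou lemma applies directly to the non-negative integrands (no dominator needed). The inputs from Theorem \ref{t:noconv} are $\liminf P(R_q>s)=1-\Lambda(\alpha s)$ and $\liminf P(R_q\le -s)=\Lambda(-\alpha(s+1))$; the resulting integrals are $E\bigl((\alpha^{-1}Z)^2\one_{Z>0}\bigr)$ and $E\bigl((1+\alpha^{-1}Z)^2\one_{1+\alpha^{-1}Z\le 0}\bigr)$, yielding the left-hand side of (\ref{e:Rq2}). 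The only technical step worth flagging is the production of the integrable dominator for reverse Fatou, and this is immediate from Theorem \ref{t:Lp} with any $p>2$; no deeper difficulty arises.
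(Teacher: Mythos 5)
Your proof is correct, and it reaches (\ref{e:Rq2}) by a genuinely different decomposition from the paper's. The paper fixes a truncation level $c>0$, writes $E(R_q^2\one_{R_q>-c})=c^2P(R_q>-c)+2\int_{-c}^\infty xP(R_q>x)\,dx$, and must then split that integral at $0$ and at $c$ because the integrand $xP(R_q>x)$ changes sign: for $x<0$ an upper bound on the $\limsup$ requires the \emph{lower} asymptotic bound $P(\alpha^{-1}Z>x)=\liminf_q P(R_q>x)$, while for $x>0$ it requires the upper one; the leftover pieces ($2\int_c^\infty xP(R_q>x)\,dx$ and $E(R_q^2\one_{R_q\le-c})$) are controlled by the third-moment Markov bound and by uniform integrability from Theorem \ref{t:Lp}, and finally $c\to\infty$. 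Your layer-cake split $E(R_q^2)=\int_0^\infty 2s\,P(R_q>s)\,ds+\int_0^\infty 2s\,P(R_q\le-s)\,ds$ keeps both integrands non-negative over the whole half-line, so ordinary Fatou gives the lower bound outright, and a single application of reverse Fatou---with the integrable dominator $\min(2s,\,2Ks^{-2})$ supplied by Theorem \ref{t:Lp} with $p=3$, which is exactly the input the paper uses for its remainder term $C$---gives the upper bound, with no truncation parameter and no sign-dependent choice of which half of (\ref{e:Rqbounds}) to invoke: the choice is made automatically by whether one is bounding $P(R_q>s)$ or $P(R_q\le-s)$. The essential inputs are identical (Theorem \ref{t:noconv} for the pointwise $\liminf$ and $\limsup$ of the tails, Theorem \ref{t:Lp} for integrability), and your identifications of the four resulting integrals with the Gumbel expectations in (\ref{e:Rq2}) all check out, as does the remark that the atoms of $R_q$ form a Lebesgue-null set of $s$. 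What your route buys is the elimination of the $c\to\infty$ bookkeeping and of the remainder terms the paper calls $C$ and $D$.
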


\begin{theorem}\label{t:Nqvar}
$$\limsup_{q\to\infty}\Bigl|\var N_q-\frac{\pi^2}{6\alpha^2}\Bigr|
  \le\theta(\alpha)+1-e^{-1}
    +\frac{2\bigl(\gamma+E_1(1)\bigr)}\alpha,$$
where
$\theta(\alpha)=E\bigl((1+\alpha^{-1}Z)^2\one_{0<1+\alpha^{-1}Z\le1}\bigr)$
satisfies $0<\theta(\alpha)<1$, and
$E_1(1)=\int_1^\infty t^{-1}e^{-t}\,dt\bumpeq0\cd2194$.
\end{theorem}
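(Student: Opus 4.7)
The plan is to write $\var N_q = E(R_q^2) - (ER_q)^2$ and combine the existing estimates for these two moments. Put $W:=\alpha^{-1}Z$, so $EW = \gamma/\alpha$ and $\var W = \pi^2/(6\alpha^2) =: V$. Theorem \ref{t:L1} gives $\gamma/\alpha \le \liminf ER_q \le \limsup ER_q \le \gamma/\alpha + 1$, and since both bounds are positive, $(\gamma/\alpha)^2 \le \liminf (ER_q)^2$ and $\limsup (ER_q)^2 \le (\gamma/\alpha + 1)^2$. Calling the right- and left-hand sides of (\ref{e:Rq2}) $U^*$ and $L^*$, the subadditivity of $\limsup$ on differences yields $\limsup\var N_q \le U^* - (\gamma/\alpha)^2$ and $\liminf\var N_q \ge L^* - (\gamma/\alpha + 1)^2$.

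The algebraic heart of the argument is the identity
\[
U^* - V - (\gamma/\alpha)^2 \;=\; V - L^* + (\gamma/\alpha + 1)^2
\;=\; E\bigl((1+W)^2\one_{W>-1}\bigr) - E\bigl(W^2\one_{W>0}\bigr),
\]
verified by expanding $W^2\one_{W\le 0} = W^2 - W^2\one_{W>0}$ and $(1+W)^2\one_{W\le -1} = (1+W)^2 - (1+W)^2\one_{W>-1}$ in the definitions of $U^*$ and $L^*$ and using $E(W^2) = V + (\gamma/\alpha)^2$. Splitting the right-hand side of the identity over the disjoint sets $\{-1<W\le 0\}$ and $\{W>0\}$ rewrites it as $\theta(\alpha) + E((1+2W)\one_{W>0})$, so both $\limsup(\var N_q - V)$ and $V - \liminf \var N_q$ are bounded above by the \emph{same} constant.

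Two elementary evaluations match that constant to the stated form: $P(Z>0) = 1 - \Lambda(0) = 1 - e^{-1}$, while the substitution $t = e^{-z}$ in $E(Z\one_{Z>0}) = \int_0^\infty z e^{-z}e^{-e^{-z}}\,dz$ followed by integration by parts gives $E(Z\one_{Z>0}) = \gamma + E_1(1)$; hence $E((1+2W)\one_{W>0}) = (1 - e^{-1}) + 2(\gamma + E_1(1))/\alpha$. The inequality $0 < \theta(\alpha) < 1$ is immediate from $0 < (1+W)^2 \le 1$ on $\{-1 < W \le 0\}$, with the boundary case carrying probability zero. The main subtlety I anticipate lies not in any single computation --- each step is elementary --- but in noticing that the limsup and liminf one-sided bounds, coming from opposite ends of the moment estimates, collapse to the same error after the cancellation in the displayed identity; without this coincidence one could only state asymmetric bounds for $\var N_q$.
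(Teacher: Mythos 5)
Your proof is correct and follows essentially the same route as the paper: it combines Lemma \ref{l:Rq2} with the first-moment bounds (\ref{e:ERqbounds}) via $\var N_q=\var R_q=E(R_q^2)-(ER_q)^2$ and then evaluates the same Gumbel quantities $P(Z>0)=1-e^{-1}$ and $E(Z\one_{Z>0})=\gamma+E_1(1)$. Your identity exhibiting the two one-sided errors as the single expression $E\bigl((1+\alpha^{-1}Z)^2\one_{Z>-\alpha}\bigr)-E\bigl((\alpha^{-1}Z)^2\one_{Z>0}\bigr)$ is a tidy repackaging of the paper's two parallel expansions of the upper and lower bounds in (\ref{e:Rq2}), which reach the symmetric bounds $\pm\Delta$ by separate computations, but the ingredients and estimates are identical.
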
\index{variance bound|)}

Here, $E_1(1)$ is a value of the \Index{exponential integral} (cf.\ \cite[\S5.1]{AS})
$E_n(x)=\int_1^\infty t^{-n}e^{-xt}\,dt$.

\section{Proofs for \S\ref{s:asymp}}\label{s:pfs}

\begin{proof}[Proof of Theorem \ref{t:noconv}]In (\ref{e:Yy}) the $k=1$ term
dominates for large $y$, so
\begin{equation}P(Y>y)=a(1-1/a)^y(1+\oh(1))\label{e:Ftail}\end{equation}
as $y\to\infty$ through integer values. As noted
in \cite[\S1]{And70}\index{Anderson, C. W.|(}, the
fact that the integer-valued random variable $Y$ has
$$\frac{P(Y>y)}{P(Y>y+1)}\to\frac a{a-1}>1\quad\mbox{as $y\to\infty$}$$
prevents it from belonging to the `\Index{domain of attraction}' for maxima of any
\Index{extreme-value distribution}, and so no non-trivial limit distribution for
$(N_q-b_q)/a_q$, for any choices of $a_q$ and $b_q$, can exist.

For the rest of the proof, $b_q:=\alpha^{-1}\log(aq)$. Via the definition of
$\alpha$, (\ref{e:Ftail}) gives that $F(y)=1-ae^{-\alpha y}(1+\oh(1))$ as
$y\to\infty$ through integer values. So for each fixed $x\in\R$,
\begin{equation}
P(N_q-b_q\le x)=F^q(\lfloor x+b_q\rfloor)
  =\bigl(1-ae^{-\alpha\lfloor x+b_q\rfloor}(1+\oh(1))\bigr)^q\label{e:Nqbq}
\end{equation}
as $q\to\infty$. Then
\begin{align*}P(N_q-b_q\le x)&\le\bigl(1-ae^{-\alpha(x+b_q)}(1+\oh(1))\bigr)^q\\
  &=\left(1-\frac{e^{-\alpha x}(1+\oh(1))}q\right)^q
    \to\Lambda(\alpha x)\quad\mbox{as $q\to\infty$}.
\end{align*}
With $x\in\R$ still fixed we define the sequence $\bigl(q(k)\bigr)_{k=1}^\infty$
to be those $q$ for which the interval $(x+b_{q-1},x+b_q]$ contains one or
more integers, i.e.\ for which $x+b_{q-1}<\lfloor x+b_q\rfloor$. Since
$b_q\to\infty$ this is an infinite sequence, and since $b_{q+1}-b_q\to0$ we
have $x+b_{q(k)}-\lfloor x+b_{q(k)}\rfloor\to0$ as $k\to\infty$, whence with
(\ref{e:Nqbq}) we conclude that $P(N_{q(k)}-b_{q(k)}\le x)\to\Lambda(\alpha
x)$ as $k\to\infty$. Thus $\limsup_{q\to\infty}P(N_q-b_q\le
x)=\Lambda(\alpha x)$.

For the limit inferior,
\begin{align*}P(N_q-b_q\le x)&\ge\bigl(1-ae^{-\alpha(x-1+b_q)}(1+\oh(1))\bigr)^q\\
  &=\left(1-\frac{e^{-\alpha(x-1)}(1+\oh(1))}q\right)^q\\
  &\to\Lambda(\alpha(x-1))\quad\mbox{as $q\to\infty$}.
\end{align*}
With the same sequence $\bigl(q(k)\bigr)$ as above, note that
$x+b_{q(k)-1}-\lfloor x+b_{q(k)-1}\rfloor\to1$ as $k\to\infty$, so
\begin{align*}P(N_{q(k)-1}-b_{q(k)-1}\le x)
  &=\bigl(1-ae^{-\alpha\lfloor x+b_{q(k)-1}\rfloor}(1+\oh(1))\bigr)^{q(k)-1}\\
  &=\bigl(1-ae^{-\alpha(x+b_{q(k)-1}-1)}(1+\oh(1))\bigr)^{q(k)-1}
\end{align*}
by (\ref{e:Nqbq}). The right-hand side converges to
$\Lambda(\alpha(x-1))$. Thus $\liminf_{q\to\infty}\allowbreak P(N_q-b_q\le
x)=\Lambda(\alpha(x-1))$. This establishes (\ref{e:Lambdabounds}).

The extension to local behaviour\index{extreme-value theory (EVT)!local EVT} is due to \cite{And80}\index{Anderson, C. W.|)}. To gain the
conclusion as we formulate it, (\ref{e:local}), we may argue directly: fix an
integer $n$ and start from
$$P(N_q-\lceil b_q\rceil\le n)=F^q(n+\lceil b_q\rceil)
  =\bigl(1-ae^{-\alpha(n+\lceil b_q\rceil)}(1+\oh(1))\bigr)^q$$
as $q\to\infty$. Now
$$ae^{-\alpha(n+\lceil b_q\rceil)}=\frac{e^{-\alpha(n+\lceil b_q\rceil-b_q)}}q
  =\frac{e^{-\alpha(n+1-\{b_q\})}}q,$$
and as the convergence in $(1-c/q)^q\to e^{-c}$ is locally uniform in $c$ we
deduce that
$$P(N_q-\lceil b_q\rceil\le n)-\Lambda\bigl(\alpha(n+1-\{b_q\})\bigr)\to0
  \quad\mbox{as $q\to\infty$.}$$
Subtract from this the corresponding formula with $n$ replaced by $n-1$, and
(\ref{e:local}) follows.
\end{proof}

For the next result we need a uniform bound on expressions of the form
$1-(1-u/n)^n$:

\begin{lemma}\label{l:e} For any $u_0>0$ there exists a positive integer
  $n_1=n_1(u_0)$ such that for $n\ge n_1$ and $0\le u\le u_0$,
$$1-\Bigl(1-\frac un\Bigr)^n\le2u.$$
\end{lemma}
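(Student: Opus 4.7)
The plan is to deduce the inequality from a one‑line elementary bound. For $u\le n$ one has the identity
$$1-\Bigl(1-\frac un\Bigr)^n=\int_0^u\Bigl(1-\frac tn\Bigr)^{n-1}\,dt,$$
and since the integrand lies in $[0,1]$ on the range of integration, the right‑hand side is at most $u$, which is certainly $\le 2u$. (Equivalently, Bernoulli's inequality $(1-u/n)^n\ge1-u$ gives the same conclusion immediately.) So the job reduces to ensuring $u/n\le1$, i.e.\ to choosing $n$ large enough that $u_0\le n$.

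Concretely, I would simply set $n_1:=\lceil u_0\rceil$. Then for every $n\ge n_1$ and every $u\in[0,u_0]$ we have $u\le u_0\le n$, so the displayed identity (or Bernoulli) applies and yields
$$1-\Bigl(1-\frac un\Bigr)^n\le u\le 2u,$$
as required.

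There is essentially no obstacle: the factor $2$ in the statement is far from tight, the sharper bound $1-(1-u/n)^n\le u$ holds for all admissible $n$ and $u$, and the only role of $n_1$ is to keep $u/n\le1$ so that the elementary inequality is in force. Presumably the weaker form with the factor $2$ is stated because it is all that is needed in the application to $L^p$‑boundedness of $R_q$, where a little slack simplifies later estimates.
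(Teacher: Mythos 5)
Your proof is correct, and it takes a genuinely different route from the paper's. The paper works through the logarithm: it picks $t_0\approx0\cd7968$ with $\log(1-t_0)=-2t_0$, so that $\log(1-t)\ge-2t$ on $[0,t_0]$, takes $n_1\ge u_0/t_0$, and deduces $(1-u/n)^n\ge e^{-2u}$ and hence $1-(1-u/n)^n\le1-e^{-2u}\le2u$; the factor $2$ is genuinely consumed in the step $\log(1-t)\ge-2t$. You instead invoke Bernoulli's inequality $(1-u/n)^n\ge1-u$ (equivalently the integral identity $1-(1-u/n)^n=\int_0^u(1-t/n)^{n-1}\,dt$ with integrand at most $1$), valid as soon as $u\le n$, which yields the sharper conclusion $1-(1-u/n)^n\le u$ with the simpler and smaller threshold $n_1=\lceil u_0\rceil$. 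Both arguments are sound and either suffices for the application in Theorem \ref{t:Lp}, where the constant only propagates into the harmless factor $4$ in the bound $P(R_q>x)\le4e^{\alpha-\alpha x}$; yours shows the lemma's factor $2$ is slack, while the paper's route is the standard device for comparing $(1-u/n)^n$ with $e^{-u}$ and would adapt to situations where Bernoulli alone is not enough. Your final speculation is also accurate: the statement with $2u$ is just what the $L^p$-boundedness proof needs.
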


\begin{proof} There exists $t_0>0$ (its value is about $0\cd7968$) such that
  $\log(1-t_0)=-2t_0$, so $\log(1-t)\ge-2t$ for $0\le t\le t_0$. Take $n_1\ge
  u_0/t_0$, then $1-(1-u/n)^n\le1-e^{-2u}$ for $n\ge n_1$ and $0\le u\le u_0$,
  and as $1-e^{-2u}\le2u$ the result follows.
\end{proof}

\begin{proof}[Proof of Theorem \ref{t:Lp}] We write $\one_T:=1$ if statement
$T$ is true, $\one_T:=0$ if $T$ is false. Fix $n\in\N$. The distribution of
$N_q$ is such that $E(R_q^{2n})<\infty$ for all $q$. We prove that
$\sup_{q\in\N}E(R_q^{2n})<\infty$. Now
$$E(R_q^{2n})=\int_{(-\infty,0]}x^{2n}\,dP(R_q\le x)
  -\int_{(0,\infty)}x^{2n}\,dP(R_q>x),$$
and so, on integrating by parts,
\begin{align*}E(R_q^{2n})
  &=-2n\int_{-\infty}^0 x^{2n-1}P(R_q\le x)\,dx
    +2n\int_0^\infty x^{2n-1}P(R_q>x)\,dx\\
  &=:A+B,\end{align*}
say.

In (\ref{e:Yy}) the right-hand
side is asymptotic to its first term, $ae^{-\alpha y}$. There exists
$y_0$ such that for real $y\ge y_0$ (not just integer $y$),
$P(Y>y)\le2ae^{-\alpha(y-1)}$. So for $x\ge0$ and $q\ge a^{-1}e^{\alpha y_0}$,
$$P(Y>x+b_q)\le2ae^{-\alpha(x+b_q-1)}=\frac2q\, e^{\alpha-\alpha x},$$
and hence
$$P(R_q>x)=1-\bigl(1-P(Y>x+b_q)\bigr)^q
  \le1-\biggl(1-\frac2q\, e^{\alpha-\alpha x}\biggr)^q.$$
Now apply Lemma \ref{l:e}. It follows that there exists $q_1$ such that for
$q\ge q_1$ and $x\ge0$,
$$P(R_q>x)\le4e^{\alpha-\alpha x}.$$
Therefore, for $q\ge q_1$,
$$B=2n\int_0^\infty x^{2n-1}P(R_q>x)\,dx
  \le8n\int_0^\infty x^{2n-1}e^{\alpha-\alpha x}\,dx<\infty.$$

It remains to bound $A$. Returning again to (\ref{e:Yy}),
observe that we may find $y_1$ so that $P(Y>y)\ge \frac12 ae^{-\alpha y}$ for
all real $y\ge y_1$. Therefore for $x\ge y_1-b_q$ we have
$$P(Y>x+b_q)\ge\frac12 ae^{-\alpha(x+b_q)}=\frac1{2q}e^{-\alpha x},$$
and so
\begin{align}P(R_q\le x)&=\bigl(1-P(Y>x+b_q)\bigr)^q\notag \\
  &\le\exp\bigl(-qP(Y>x+b_q)\bigr)\notag \\
  &\le\exp\Bigl(-\frac12 e^{-\alpha x}\Bigr)\quad\mbox{for $x\ge y_1-b_q$.}
\label{e:negxbound}\end{align}

In $A=-2n\int_{-\infty}^0 x^{2n-1}P(R_q\le x)\,dx$, the lower endpoint of the
interval of integration may be taken to be $-b_q$, as the integrand vanishes
below this point, and we then choose further to split the integral to obtain
\begin{align*}A&=-2n\int_{y_1-b_q}^0 x^{2n-1}P(R_q\le x)\,dx
    -2n\int_{-b_q}^{y_1-b_q} x^{2n-1}P(R_q\le x)\,dx\\
  &=:A_1+A_2,\end{align*}
say. If we take $q$ so large that $b_q>y_1$, (\ref{e:negxbound}) gives
\begin{align*}A_1
  &\le-\int_{y_1-b_q}^0 x^{2n-1}\exp\Bigl(-\frac12 e^{-\alpha x}\Bigr)\,dx\\
  &<-2n\int_{-\infty}^0 x^{2n-1}\exp\Bigl(-\frac12 e^{-\alpha x}\Bigr)\,dx
  <\infty.\end{align*}
Finally,
\begin{align*}A_2&=-2n\int_{b_q}^{y_1-b_q}x^{2n-1}F^q(x+b_q)\,dx\\
  &=-2n\int_0^{y_1} (u-b_q)^{2n-1}F^q(u)\,du\\
  &\le2ny_1b_q^{2n-1}F^q(y_1)\\
  &=2ny_1\Bigl(\frac{\log(aq)}\alpha \Bigr)^{2n-1}F^q(y_1).\end{align*}
This tends to 0 as $q\to\infty$, because $0<F(y_1)<1$.

We have shown that $\limsup_{q\to\infty}E(R_q^{2n})<\infty$, so
$\sup_{q\in\N}E(R_q^{2n})<\infty$ as claimed, and the result follows.
\end{proof}

Before proving Theorem \ref{t:L1} we note that (\ref{e:Lambdabounds}) says
that for each $x\in\R$,
\begin{equation}\Lambda(\alpha(x-1))=\liminf_{q\to\infty}P(R_q\le x)
  \le\limsup_{q\to\infty}P(R_q\le x)=\Lambda(\alpha x),
  \label{e:Rqbounds}\end{equation}
and that what we have to prove is
\begin{equation}E(\alpha^{-1}Z)\le\liminf_{q\to\infty}ER_q
  \le\limsup_{q\to\infty}ER_q
  \le E(1+\alpha^{-1}Z).\label{e:ERqbounds}\end{equation}
We use (\ref{e:Rqbounds}) mostly in the form
\begin{multline}P(\alpha^{-1}Z>x)=\liminf_{q\to\infty}P(R_q>x)\\
  \le\limsup_{q\to\infty}P(R_q>x)=P(1+\alpha^{-1}Z>x),
  \label{e:Rqtail}\end{multline}
obtained by subtracting each component from 1.
We make much use of Fatou's Lemma\index{Fatou, P. J. L.!Fatou's lemma|(}, that for non-negative
$f_n$,
$$\liminf_{n\to\infty}\int f_n\ge\int\liminf_{n\to\infty}f_n,$$
and also of its extended form: that if $f_n\le f$ and $f$ is integrable then
$$\limsup_{n\to\infty}\int f_n\le\int\limsup_{n\to\infty}f_n.$$
The latter may be deduced from the former by considering $f-f_n$.

\begin{proof}[Proof of Theorem \ref{t:L1}]We use the fact that for a random
variable $X$ with finite mean, and any constant $c$,
\begin{equation}E(X\one_{X>-c})
  =-cP(X>-c)+\int_{-c}^\infty P(X>x)\,dx,\label{e:Xone}\end{equation}
as may be proved by integrating $\int_{(-c,\infty)}x\,dP(X\le x)$ by parts.
We thus have, for $c>0$,
\begin{align*}ER_q&\le E(R_q\one_{R_q>-c})\\
  &=-cP(R_q>-c)+\int_{-c}^c P(R_q>x)\,dx+\int_c^\infty P(R_q>x)\,dx\\
  &=:A+B+C,\end{align*}
say. First, by the left-hand equality in (\ref{e:Rqtail}),
$\limsup_{q\to\infty}A=-cP(\alpha^{-1}Z\break>-c)$. Second, from the
right-hand equality in (\ref{e:Rqtail}), and the extended Fatou Lemma (take
the dominating integrable function to be 1),
$$\limsup_{q\to\infty}B\le\int_{-c}^c P(1+\alpha^{-1}Z>x)\,dx
  \le\int_{-c}^\infty P(1+\alpha^{-1}Z>x)\,dx.$$
Combining the bounds on $A$ and $B$ yields
\begin{align*}\limsup_{q\to\infty}(A+B)
  &\le-cP(1+\alpha^{-1}Z>-c)+\int_{-c}^\infty P(1+\alpha^{-1}Z>x)\,dx\\
    &\qquad{}+c\bigl(P(1+\alpha^{-1}Z>-c)-P(\alpha^{-1}Z>-c)\bigr)\\
  &=E\bigl((1+\alpha^{-1}Z)\one_{1+\alpha^{-1}Z>-c}\bigr)\\
    &\qquad{}+cP(-c-1<\alpha^{-1}Z\le-c)\\
  &<E\bigl((1+\alpha^{-1}Z)\one_{1+\alpha^{-1}Z>-c}\bigr)+cP(\alpha^{-1}Z\le-c).
\end{align*}

For the third upper bound, on $C$, we note (with an eye to the next proof as
well) that by Theorem \ref{t:Lp}, $K:=\sup_{q\in\N}E(\abs{R_q}^3)<\infty$. Then
for $x>0$, $P(R_q>x)\le K/x^3$, hence $C\le K/(2c^2)$. On combining this bound
with that on $A+B$ we gain an upper bound on $\limsup_{q\to\infty}ER_q$ that
converges to $E(1+\alpha^{-1}Z)$ as $c\to\infty$, concluding the proof of the
upper bound in (\ref{e:ERqbounds}).

For the lower bound we again use (\ref{e:Xone}), this time to write
\begin{align*}ER_q&=E(R_q\one_{R_q\le-c})+E(R_q\one_{R_q>-c})\\
  &=E(R_q\one_{R_q\le-c})-cP(R_q>-c)+\int_{-c}^\infty P(R_q>x)\,dx\\
  &=:\tilde A+\tilde B+\tilde C,\end{align*}
say. First, Fatou's Lemma and then the left-hand equality in
(\ref{e:Rqtail}) give
$$\liminf_{q\to\infty}\tilde C\ge\int_{-c}^\infty \liminf_{q\to\infty} P(R_q>x)\,dx
  =\int_{-c}^\infty P(\alpha^{-1}Z>x)\,dx.$$
Second,
$$\liminf_{q\to\infty}\tilde B=-c\limsup_{q\to\infty} P(R_q>-c)
  =-cP(1+\alpha^{-1}Z>-c),$$
this time by the right-hand equality in (\ref{e:Rqtail}). Combining, we find
that
\begin{align*}\liminf_{q\to\infty}(\tilde B+\tilde C)
  &\ge-cP(\alpha^{-1}Z>-c)+\int_{-c}^\infty P(\alpha^{-1}Z>x)\,dx\\
    &\quad{}-c\bigl(P(1+\alpha^{-1}Z>-c)-P(\alpha^{-1}Z>-c)\bigr)\\
  &=E(\alpha^{-1}Z\one_{\alpha^{-1}Z>-c})-cP(-c-1<\alpha^{-1}Z\le-c)\\
  &\ge E(\alpha^{-1}Z)-cP(\alpha^{-1}Z\le-c).\end{align*}

Finally, to put a lower bound on $\tilde A$ we may again use the `Markov
inequality'\index{Markov, A. A.!Markov inequality} method used above for $C$,
obtaining $\tilde A\ge-K/(2c^2)$. Combining this with the above, we gain a
lower bound on $\liminf_{q\to\infty}ER_q$ that converges to $E(\alpha^{-1}Z)$
as $c\to\infty$. We thus obtain the lower bound in (\ref{e:ERqbounds}).
\end{proof}

\begin{proof}[Proof of Lemma \ref{l:Rq2}]We use variants of the
decompositions in the previous proof. First, the upper bound. With $c>0$
fixed,
\begin{align*}E(R_q^2)&=E(R_q^2\one_{R_q>-c})+E(R_q^2\one_{R_q\le-c})\\
  &=c^2P(R_q>-c)+2\int_{-c}^\infty xP(R_q>x)\,dx+E(R_q^2\one_{R_q\le-c})\\
  &=c^2P(R_q>-c)+2\int_{-c}^0 xP(R_q>x)\,dx+2\int_0^c xP(R_q>x)\,dx\\
    &\qquad{}+2\int_c^\infty xP(R_q>x)\,dx+E(R_q^2\one_{R_q\le-c})\\
  &=:A+B_1+B_2+C+D,\end{align*}
say. By the right-hand equality in (\ref{e:Rqtail}),
$\limsup_{q\to\infty}A=c^2P(1+\alpha^{-1}Z>-c)$. By the left-hand
equality and Fatou's Lemma, followed by an integration by parts,
\begin{align*}\limsup_{q\to\infty}B_1&\le2\int_{-c}^0 xP(\alpha^{-1}Z>x)\,dx\\
  &=-c^2P(\alpha^{-1}Z>-c)+E\bigl((\alpha^{-1}Z)^2\one_{-c<\alpha^{-1}Z\le0}\bigr).
  \end{align*}
Combining,
\begin{align}\limsup_{q\to\infty}(A+B_1)
  &\le c^2P(-c-1<\alpha^{-1}Z\le-c)\notag \\
    &\qquad{}+E\bigl((\alpha^{-1}Z)^2\one_{-c<\alpha^{-1}Z\le0}\bigr)\notag \\
  &\le c^2P(\alpha^{-1}Z\le-c)+E\bigl((\alpha^{-1}Z)^2\one_{Z\le0}\bigr).
  \label{e:AB1}\end{align}

Next, by the right-hand equality in (\ref{e:Rqtail}), and the extended Fatou
Lemma,
\begin{align*}\limsup_{q\to\infty}B_2&\le2\int_0^c xP(1+\alpha^{-1}Z>x)\,dx\\
  &\le2\int_0^\infty xP(1+\alpha^{-1}Z>x)\,dx\\
  &=E\bigl((1+\alpha^{-1}Z)^2\one_{1+\alpha^{-1}Z>0}\bigr).\end{align*}
On combining this with (\ref{e:AB1}) and letting $c\to\infty$ we conclude that
\begin{multline*}\lim_{c\to\infty}\limsup_{q\to\infty}(A+B_1+B_2)\\
  \le E\bigl((\alpha^{-1}Z)^2\one_{Z\le0}
    +(1+\alpha^{-1}Z)^2\one_{1+\alpha^{-1}Z>0}\bigr).\end{multline*}
The upper bound in (\ref{e:Rq2}) will follow if we can show that
$\lim_{c\to\infty}\break\limsup_{q\to\infty}C=0$, and likewise for $D$. For
$C$ this follows by inserting into its defining formula the bound $P(R_q>x)\le
K/x^3$ developed in the proofs above, while for $D$ it follows from Theorem
\ref{t:Lp} via the \Index{uniform integrability} of the family $(R_q^2)_{q\in\N}$. The
upper bound in (\ref{e:Rq2}) is proved.

For the lower bound we fix $c>0$ and write
\begin{align*}E(R_q^2)&\ge E(R_q^2\one_{R_q>-c})\\
  &=c^2P(R_q>-c)+2\int_{-c}^\infty xP(R_q>x)\,dx\\
  &\ge c^2P(R_q>-c)+2\int_{-c}^0 xP(R_q>x)\,dx+2\int_0^c xP(R_q>x)\,dx.
  \end{align*}
In this right-hand side, use the left-hand equality in (\ref{e:Rqtail})
on the first term, use the right-hand equality and the extended Fatou Lemma
on the second term, and use the left-hand equality and Fatou's Lemma\index{Fatou, P. J. L.!Fatou's lemma|)}
on the third term, to give
\begin{align*}\liminf_{q\to\infty}E(R_q^2)&\ge c^2P(\alpha^{-1}Z>-c)
  +2\int_{-c}^0 xP(1+\alpha^{-1}Z>x)\,dx\\
  &\qquad{}+2\int_0^c xP(\alpha^{-1}Z>x)\,dx.\end{align*}
By two integrations by parts this becomes
\begin{align*}\liminf_{q\to\infty}E(R_q^2)&\ge c^2P(\alpha^{-1}Z>-c)
    -c^2P(1+\alpha^{-1}Z>-c)\\
    &\qquad{}+E\bigl((1+\alpha^{-1}Z)^2\one_{-c<1+\alpha^{-1}Z\le0}\bigr)\\
    &\qquad{}+c^2P(\alpha^{-1}Z>c)
    +E\bigl((\alpha^{-1}Z)^2\one_{0<\alpha^{-1}Z\le c}\bigr)\\
  &=-c^2P(-c-1<\alpha^{-1}Z\le-c)\\
    &\qquad{}+E\bigl((1+\alpha^{-1}Z)^2\one_{-c<1+\alpha^{-1}Z\le0}\bigr)\\
    &\qquad{}+c^2P(\alpha^{-1}Z>c)
    +E\bigl((\alpha^{-1}Z)^2\one_{0<\alpha^{-1}Z\le c}\bigr)\\
  &\ge-c^2P(\alpha^{-1}Z\le-c)
    +E\bigl((1+\alpha^{-1}Z)^2\one_{-c<1+\alpha^{-1}Z\le0}\bigr)\\
    &\qquad{}+E\bigl((\alpha^{-1}Z)^2\one_{0<\alpha^{-1}Z\le c}\bigr).\end{align*}
On letting $c\to\infty$ we obtain the lower bound in (\ref{e:Rq2}).
\end{proof}

\begin{proof}[Proof of Theorem \ref{t:Nqvar}]By Lemma \ref{l:Rq2},
\begin{align}&\limsup_{q\to\infty}E(R_q^2)\notag \\
  &\qquad{}\le E\bigl((\alpha^{-1}Z)^2\one_{Z\le0}\bigr)
    +E\bigl((1+\alpha^{-1}Z)^2\one_{0<1+\alpha^{-1}Z\le1}\bigr)\notag \\
    &\qquad\qquad{}+E\bigl((1+\alpha^{-1}Z)^2\one_{Z>0}\bigr)\notag \\
  &\qquad{}=E\bigl((\alpha^{-1}Z)^2\bigr)+\theta(\alpha)+P(Z>0)+\frac2\alpha
    E(Z\one_{Z>0}).\label{e:Rq2expand}\end{align}
Now $R_q=N_q-b_q$, so $\var N_q=\var R_q=E(R_q^2)-(ER_q)^2$. From
(\ref{e:ERqbounds}) we have $\liminf_{q\to\infty}ER_q\ge
E(\alpha^{-1}Z)=\gamma/\alpha>0$, so
$\liminf_{q\to\infty}(ER_q)^2\ge\bigl(E(\alpha^{-1}Z)\bigr)^2$. With
(\ref{e:Rq2expand}) this gives
$$\limsup_{q\to\infty}\var N_q\le\var(\alpha^{-1}Z)+\theta(\alpha)+P(Z>0)
  +\frac2\alpha E(Z\one_{Z>0}).$$
We have $\var Z=\pi^2/6$, while $P(Z>0)=1-e^{-1}$. Also
$E(Z\one_{Z>0})=\gamma-E(Z\one_{Z\le0})$, and
\begin{align*}-E(Z\one_{Z\le0})&=\int_{-\infty}^0(-z)e^{-z}\exp(-e^{-z})\,dz\\
  &=\int_1^\infty(\log t)e^{-t}\,dt=\int_1^\infty
  \frac{e^{-t}}t\,dt=E_1(1).\end{align*}
The bound
$$\limsup_{q\to\infty}\var N_q\le\frac{\pi^2}{6\alpha^2}+\theta(\alpha)+1-e^{-1}
    +\frac{2\bigl(\gamma+E_1(1)\bigr)}\alpha$$
follows.

For the lower bound, the lower bound in Lemma \ref{l:Rq2} may be written
\begin{align*}\liminf_{q\to\infty}E(R_q^2)
  &\ge E\bigl((1+\alpha^{-1}Z)^2\one_{Z\le0}\\
  &\qquad{}-E\bigl((1+\alpha^{-1}Z)^2\one_{0<1+\alpha^{-1}Z\le1}\bigr)
  +E\bigl((\alpha^{-1}Z)^2\one_{Z>0}\bigr)\\
  &=E\bigl((\alpha^{-1}Z)^2\bigr)-\theta(\alpha)+P(Z\le0)
    +2E(\alpha^{-1}Z\one_{Z\le0}).
  \end{align*}
From (\ref{e:ERqbounds}) we have $0<\limsup_{q\to\infty}ER_q\le
E(1+\alpha^{-1}Z)$, so $\limsup_{q\to\infty}\break
(ER_q)^2\le1+2E(\alpha^{-1}Z)+\bigl(E(\alpha^{-1}Z)\bigr)^2$,
which with the above gives
\begin{multline*}\liminf_{q\to\infty}\var R_q\\
  \ge\var(\alpha^{-1}Z)-\theta(\alpha)-1+P(Z\le0)
  -2\alpha^{-1}\bigl(EZ-E(Z\one_{Z\le0})\bigr).\end{multline*}
Thus, since $\var N_q=\var R_q$,
$$\liminf_{q\to\infty}\var N_q
  \ge\frac{\pi^2}{6\alpha^2}-\theta(\alpha)-1+e^{-1}
    -\frac{2(\gamma+E_1(1))}\alpha,$$
which is the required lower bound on $\liminf_{q\to\infty}\var N_q$ and
completes the proof.
\end{proof}

\section{Numerical results}\label{s:results}
\index{Matlab@{\it Matlab}}\emph{Matlab} and
\emph{Pascal}\index{Pascal, B.!Pascal programming language@\emph{Pascal} programming language} were used to evaluate $EN_q$ for different values
of $a$ and $q$. Fig.\ \ref{f:ENlowq} shows values for $EN_q$ for different
values of $a$ for tests with up to 20 questions. For example, for a test with
10 alternatives for each question $EN_q$ ranges from 29 when there is one
question in the test to only 56 when there are 20 questions. Contrast this
with the total number of possible tests, which increases from 10 to $10^{20}$
in this range.

\begin{figure}[!ht]
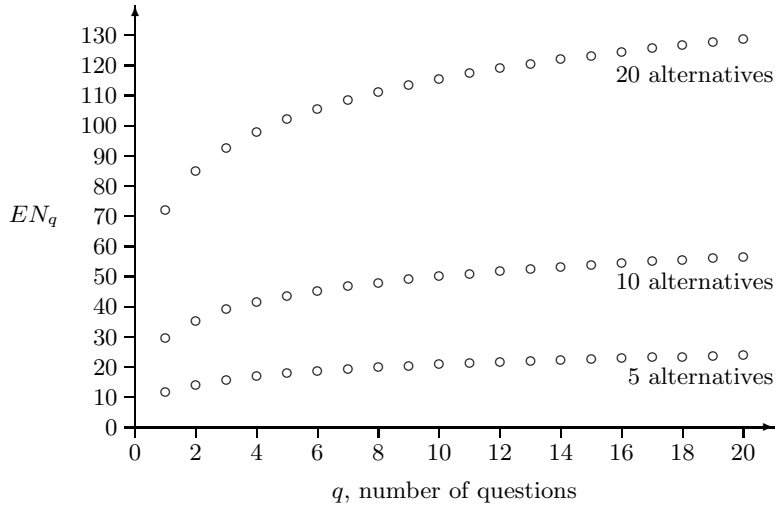

$$\vbox{\small
\beginpicture
\normalgraphs
\setcoordinatesystem units <4mm,.4mm>
\unitlength=4mm
\setplotarea x from 0 to 21, y from 0 to 140
\axis bottom invisible label {$q$, number of questions}
  ticks numbered from 0 to 20 by 2 /
\axis left invisible label {$EN_q$} ticks numbered from 0 to 130 by 10 /
\put {\vector(1,0){21}} [Bl] at 0 0
\put {\vector(0,1){14}} [Bl] at 0 0
\accountingoff
\multiput {$\circ$} at 1 11.4  2 14  3 15.7  4 16.9  5 17.8  6 18.6
  7 19.2  8 19.8  9 20.3  10 20.8  11 21.2  12 21.6  13 21.9  14 22.2
  15 22.5  16 22.8  17 23.1  18 23.3  19 23.6  20 23.8 /
\put {5 alternatives} [tr] <0pt,-1.5mm> at 21 23.8
\multiput {$\circ$} at 1 29.3  2 35.2  3 38.9  4 41.5  5 43.5  6 45.2
  7 46.6  8 47.8  9 48.9  10 49.9  11 50.8  12 51.6  13 52.3  14 53
  15 53.7  16 54.3  17 54.9  18 55.4  19 55.9  20 56.4 /
\put {10 alternatives} [tr] <0pt,-2mm> at 21 56.4
\multiput {$\circ$} at 1 72  2 84.7  3 92.3  4 97.8  5 102  6 105.5
  7 108.5  8 111  9 113.3  10 115.3  11 117.1  12 118.8  13 120.4  14 121.8
  15 123.1  16 124.4  17 125.6  18 126.7  19 127.7  20 128.7 /
\put {20 alternatives} [tr] <0pt,-3.5mm> at 21 128.7
\accountingon
\endpicture
}$$
	\caption{{\protect \small $EN_q$, the expected number of tests that
          need to be generated in order for all questions to have appeared
          at least once, for tests with up to 20 questions and 5, 10, and
          20 alternatives for each question.}}
	\label{f:ENlowq}
\end{figure}

These results led the authors to extend the investigation to consider tests
containing up to 200 questions. Fig.\ \ref{f:ENhighq} demonstrates that, as
the number of questions in a test is increased, the average number of tests
required in order for all possible questions to have appeared increases quite
slowly. In a 200-question test with 10 alternatives for each question, there
are $10^{200}$ different possible tests and a total bank\index{question bank} of 2000 questions;
however, on average all questions will have appeared at least once by the time
only 78 tests have been generated. Table \ref{ta:ENq} summarises the results
from Fig.\ \ref{f:ENhighq}, giving $EN_q$ for different values of $a$ and $q$.

\begin{table}[!ht]\footnotesize
  \caption{Values of $EN_q$ for various values of $a$ and $q$.}
  \label{ta:ENq}
\begin{tabular}{@{}cccccccc@{}}\hline
Number of alternatives&\multicolumn{7}{|c}{Number of questions in test $(q)$}\\
for each question$(a)$&$1$&$5$&$10$&$20$&$50$&$100$&$200$\\\hline
$5$&$11\cd4$&$17\cd8$&$20\cd8$&$23\cd8$&$27\cd9$&$31\cd0$&$34\cd1$\\
$10$&$29\cd3$&$43\cd5$&$49\cd9$&$56\cd4$&$65\cd0$&$71\cd6$&$78\cd1$\\
$20$&$72\cd0$&$102\cd0$&$115\cd3$&$128\cd7$&$146\cd5$&$160\cd0$&$173\cd5$\\\hline
\end{tabular}
\end{table}

\begin{figure}[!ht]
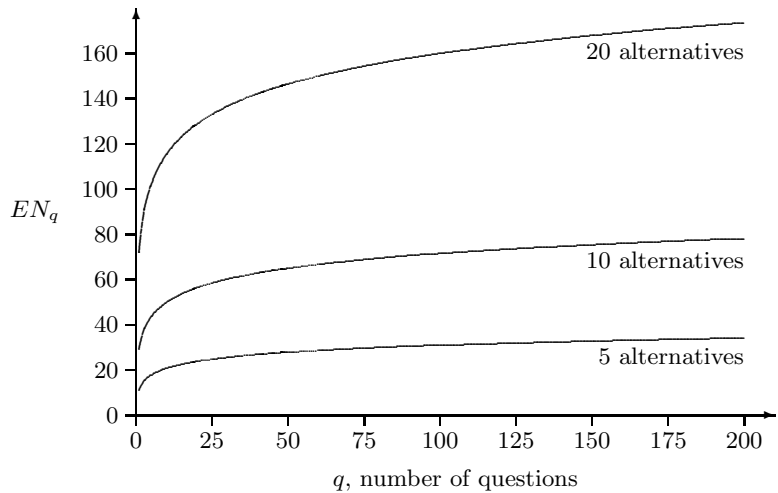

$$\vbox{\small
\beginpicture
\normalgraphs
\setcoordinatesystem units <.4mm,.3mm>
\unitlength=.4mm
\setplotarea x from 0 to 210, y from 0 to 180
\axis bottom invisible label {$q$, number of questions}
  ticks numbered from 0 to 200 by 25 /
\axis left invisible label {$EN_q$} ticks numbered from 0 to 160 by 20 /
\put {\vector(1,0){210}} [Bl] at 0 0
\put {\vector(0,1){135}} [Bl] at 0 0
\put {5 alternatives} [tr] <0pt,-1mm> at 200 33.5
\put {10 alternatives} [tr] <0pt,-1.5mm> at 200 77.6
\put {20 alternatives} [tr] <0pt,-2.5mm> at 200 173
\setquadratic
\plot 1 11.4  2 14  3 15.7  4 16.9  5 17.8  6 18.6  7 19.2  8 19.8  9 20.3
  10 20.8  11 21.2  12 21.6  13 21.9  14 22.2  15 22.5  16 22.8  17 23.1
  18 23.3  19 23.6  20 23.8  25 24.8  30 25.6  35 26.3  40 26.9  42 27.1
  44 27.3  45 27.4  46 27.5  48 27.7  50 27.9  60 28.7  80 30  100 31
  150 32.8  200 34.1 /
\plot 1 29.3  2 35.2  3 38.9  4 41.5  5 43.5  6 45.2  7 46.6  8 47.8  9 48.9
  10 49.9  11 50.8  12 51.6  13 52.3  14 53  15 53.7  16 54.3  17 54.9
  18 55.4  19 55.9  20 56.4  25 58.5  30 60.2  35 61.6  40 62.9  42 63.4
  44 63.8  45 64  46 64.2  48 64.6  50 65  60 66.7  80 69.5  100 71.6
  150 75.4  200 78.1 /
\plot 1 72  2 84.7  3 92.3  4 97.8  5 102  6 105.5  7 108.5  8 111  9 113.3
  10 115.3  11 117.1  12 118.8  13 120.4  14 121.8  15 123.1  16 124.4
  17 125.6  18 126.7  19 127.7  20 128.7  25 133  30 136.6  35 139.6
  40 142.2  42 143.1  44 144  45 144.4  46 144.9  48 145.7  50 146.5
  60 150  80 155.6  100 160  150 167.9  200 173.5 /
\endpicture
}$$
	\caption{{\protect \small $EN_q$, the average number of tests that
          need to be generated in order for all questions to have appeared at
          least once, for tests with up to 200 questions and 5, 10, and 20
          alternatives for each question.}}
	\label{f:ENhighq}
\end{figure}

\section{Discussion}\label{s:discuss}
The asymptotics concern the behaviour of the random variable $N_q$,
defined in \S\ref{s:howmany}, as the number of questions, $q$, grows. There is
also dependence on $a$, the number of alternative answers per question in the
multiple choice, but we regard $a$ as fixed; it is any integer at least 2, and
we set
$$\alpha:=\log\Bigl(\frac a{a-1}\Bigr),$$
so $\alpha>0$. Theorem \ref{t:noconv} first says that $N_q$ cannot be centred
and normed so that its distribution properly converges (one could get
convergence to 0, of course, just by heavy norming)\undex{norming}. However it then says that
by centring (translation) alone, $N_q$ comes very close to looking like the
random variable $Z/\alpha$, where $Z$ has the Gumbel
distribution\index{Gumbel, E. J.!Gumbel distribution}. The
difference is a `wobble' of between 0 and 1 in the limit; persistence of
discreteness is responsible for this.

Theorem \ref{t:L1} establishes that the expected value of $N_q$ behaves
accordingly, growing like $\alpha^{-1}\log q$. More exactly, after \Undex{centring} by
$b_q:=\alpha^{-1}\log(aq)$ it differs from $\alpha^{-1}EZ$ by a number between
0 and 1 in the limit. Table \ref{ta:ENqlim} gives values of
$b_q+\alpha^{-1}EZ$ for different values of $a$ and $q$.
\begin{table}[!ht]\footnotesize
  \caption{Values of $b_q+\alpha^{-1}EZ$ for various values of $a$ and $q$.}
  \label{ta:ENqlim}
\begin{tabular}{@{}c|ccccccc@{}}\hline
Number of alternatives&\multicolumn{7}{|c}{Number of questions in test
  $(q)$}\\
for each question $(a)$&$1$&$5$&$10$&$20$&$50$&$100$&$200$\\\hline
$5$&$9\cd8$&$17\cd0$&$20\cd1$&$23\cd2$&$27\cd3$&$30\cd4$&$33\cd5$\\
$10$&$27\cd3$&$42\cd6$&$49\cd2$&$55\cd8$&$64\cd5$&$71\cd0$&$77\cd6$\\
$20$&$68\cd7$&$101\cd0$&$114\cd5$&$128\cd1$&$145\cd9$&$159\cd4$&$173\cd0$\\\hline
\end{tabular}
\end{table}
For $q\ge20$ the actual values of $EN_q$ in the
previous table exceed these by $0\cd5$--$0\cd6$, exactly as Theorem \ref{t:L1}
predicts.

What about the variance of $N_q$ as $q$ grows? Theorem \ref{t:Nqvar} says that
it does not tend to infinity, but is trapped as $q\to\infty$ between bounds
that do not depend on $q$. The precision is pleasing, given that $N_q$ does
not converge, in any sense. The asymptotic bounds on the variance of $N_q$,
$\var N_q$, are $\frac{\pi^2}{6\alpha^2}\pm\Delta$ where $\Delta$ is a strange
jumble of constants:
$$\Delta=\theta(\alpha)+1-e^{-1}
    +\frac{2\bigl(\gamma+E_1(1)\bigr)}\alpha$$
(the bounds are not claimed to be sharp).

\begin{table}[!ht]\small
\begin{minipage}{80mm}
  \caption{Asymptotic bounds on the standard deviation of $N_q$.}
  \label{ta:sd}
\end{minipage}
\begin{tabular}{@{}l|cccccc@{}}\hline
$a$&2&3&4&5&10&20\\\hline
Min s.d.&$0\cd641$&$2\cd323$&$3\cd697$&$5\cd024$&$11\cd507$&$24\cd362$\\
Max s.d.&$2\cd537$&$3\cd823$&$5\cd107$&$6\cd390$&$12\cd804$&$25\cd630$\\\hline
\end{tabular}
\end{table}

The amount of variability can be better appreciated through the standard
deviation. The asymptotic bounds on the standard deviation of $N_q$ are
$$\sqrt{\frac{\pi^2}{6\alpha^2}\pm\Delta},$$
and some values for these are in Table \ref{ta:sd}.
The lower bound is non-trivial, i.e.\ positive, in each case.

\paragraph{Acknowledgements}
We are grateful to Dave Pidcock\index{Pidcock, D.}, a colleague in the
Mathematics Education Centre at Loughborough University, for raising the query
in the first place. As a member of staff using computer-based tests to assess
students, he was concerned about this issue from a practical viewpoint. That
led RC to post a query on \Index{Allstat}. CMG was not the only person to
respond to the query, and we also acknowledge the others who responded,
particularly Simon Bond\index{BondS@Bond, S.}.\index{computer based test@computer-based test|)}

\end{document}